\numberwithin{equation}{section}
\newtheorem{thm}{Theorem}[section]
\newtheorem{lem}[thm]{Lemma}
\newtheorem{cor}[thm]{Corollary}
\begin{document}
\title{Proceedings of the American Mathematical Society}

\author{Ravshan Ashurov}
\address{National University of Uzbekistan named after Mirzo Ulugbek and Institute of Mathematics, Uzbekistan Academy of Science}
\curraddr{Institute of Mathematics, Uzbekistan Academy of Science,
Tashkent, 81 Mirzo Ulugbek str. 100170} \email{ashurovr@gmail.com}

\small

\title[Generalized localization]
{Generalized localization for spherical partial sums of multiple
Fourier series}

\begin{abstract}

In this paper the generalized localization principle for the
spherical partial sums of the multiple Fourier series in the $L_2$ -
class is proved, that is, if $f\in L_2(T^N)$ and $f=0$ on an open
set $\Omega \subset T^N$, then it is shown that the spherical
partial sums of this function converge to zero almost - everywhere
on $\Omega$. It has been previously known that the generalized
localization is not valid in $L_p(T^N)$ when $1\leq p<2$.
Thus the problem of generalized localization for the spherical
partial sums is completely solved in $L_p(T^N)$, $p\geq
1$: if $p\geq2$ then we have the generalized localization and if
$p<2$, then the generalized localization fails.

\vskip 0.3cm \noindent {\it AMS 2000 Mathematics Subject
Classifications} :
Primary 42B05; Secondary 42B99.\\
{\it Key words}: Multiple Fourier series, spherical partial sums,
convergence almost-everywhere, generalized localization.
\end{abstract}

\maketitle

\section{Introduction }

Let $\{f_n\}$, $n\in Z^N$, be the Fourier coefficients of a
function $f\in L_2(T^N)$, $N\geq2$. Consider the spherical partial
sums of the multiple Fourier series:
\begin{equation}\label{SL}
S_\lambda f(x)=\sum\limits_{|n|^2< \lambda}f_n\,e^{inx}.
\end{equation}

The aim of this paper is to investigate convergence
almost-everywhere (a.e.) of these partial sums. One of the first
questions which arise in the study of a.e. convergence of the sums
(\ref{SL}) is the question of the validity of the Luzin
conjecture: is it true that the spherical sums (\ref{SL}) of the
Fourier series of an arbitrary function $f\in L_2(T^N)$ converge
a.e. on $T^N$? In other words, does Carleson's theorem extend to
$N$-fold Fourier series when the latter is summed spherically? The
answer to this question is open so far. What is known is only that
Hunt's theorem (convergence a.e. for $L_p$ functions) does
not extend to $N$-fold ($N \geq 2$) series summed by circles (see
\cite{AAP} and references therein). Historically progress with
solving the Luzin conjecture has been made by considering easier
problems. One of such easier problems is to investigate
convergence a.e. of the spherical sums (\ref{SL}) on $T^N\setminus
supp f$.

Il'in \cite{IL} was the first to introduce the concept of
generalized principle of localization for an arbitrary
eigenfunction expansions. Following Il'in we say that the
generalized localization principle for $S_\lambda$ holds in
$L_p(T^N)$, if for any function $f\in L_p(T^N)$ the equality
\begin{equation}\label{PV}
\lim\limits_{\lambda\to\infty}S_\lambda f(x)=0
\end{equation}
holds a.e. on $T^N\setminus supp f$.

Observe, unlike the classical Riemann localization principle, here
it suffices the equality (\ref{PV}) to be hold only a.e. (not
everywhere) on $T^N\setminus supp f$.

For the spherical partial integrals of multiple Fourier integrals
(we denote by $\sigma_\lambda f(x)$) the generalized localization
principle in $L_p(R^N)$ has been investigated by many authors
(see \cite{BAS2}-\cite{SJ}). In particular,  in the remarkable
paper of A. Carbery and F. Soria  \cite{CAR} the validity of the
generalized localization for $\sigma_\lambda$ has been proved in
$L_p(R^N)$ when $2\leq p<2N/(N-1)$. Note, that the method
introduced by these authors can be easily applied to non-spherical
partial integrals too \cite{AAA}.

If we turn back to the multiple Fourier series (\ref {SL}) and
consider the classes $L_p(T^N)$ when $1\leq p<2$, then as A.
Bastys \cite{BAS}  has proved, following Fefferman in making use
of the Kakeya's problem, that the generalized localization for
$S_\lambda$ is not valid, i.e. there exists a function $f\in
L_p(T^N)$, such that on some set of positive measure, contained in
$T^N\backslash supp f,$ we have
$$
\overline{\lim\limits_{\lambda\to\infty}}|S_\lambda f(x)|=+\infty.
$$
It may be worth mentioning that in \cite{BAS} this
result is also proved for the spherical partial integrals
$\sigma_\lambda f(x)$.

The main result of this paper is the following statement.

\begin{thm}\label{MAIN}
Let $f\in L_2(T^N)$ and $f=0$ on an open set $\Omega\subset T^N$.
Then the equality (\ref{PV}) holds a.e. on $\Omega$.
\end{thm}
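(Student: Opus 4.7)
The plan is to reduce the theorem to a localised maximal-function inequality in the spirit of Carbery--Soria, then conclude by the standard density argument. First, since $\Omega$ is open, it is a countable union of compact sets $K$ with $\mathrm{dist}(K, T^N \setminus \Omega) \geq \delta > 0$, so it suffices to prove $S_\lambda f(x) \to 0$ for a.e.\ $x \in K$; by hypothesis we may assume $\mathrm{supp}\, f \subset T^N \setminus K_\delta$, where $K_\delta$ denotes the open $\delta$-neighbourhood of $K$. For $f \in C^2(T^N)$ vanishing on $K_\delta$, the convergence $S_\lambda f(x) \to f(x) = 0$ holds uniformly on $K$ by classical estimates on eigenfunction expansions of smooth functions, and such $f$ are dense in $\{g \in L^2(T^N) : g \equiv 0 \text{ on } K_\delta\}$. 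Hence, by a Banach-principle argument, it suffices to prove the maximal inequality
\begin{equation*}
\bigl\|\sup_{\lambda > 0}|S_\lambda f|\bigr\|_{L^2(K)} \leq C_{K,\delta}\,\|f\|_{L^2(T^N)}, \qquad \mathrm{supp}\, f \subset T^N \setminus K_\delta .
\end{equation*}

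To establish this, I would use the classical asymptotic of the spherical Dirichlet kernel off the diagonal. For $|z|_{T^N} \geq \delta$, Poisson summation together with stationary phase applied to the Fourier transform of the indicator of the Euclidean ball gives
\begin{equation*}
D_\lambda(z) = c_N\,\lambda^{(N-1)/4}\,|z|^{-(N+1)/2}\cos\!\bigl(\sqrt\lambda\,|z|-\tfrac{(N+1)\pi}{4}\bigr) + O\!\bigl(\lambda^{(N-3)/4}|z|^{-(N+3)/2}\bigr),
\end{equation*}
uniformly in $|z|\ge\delta$. Writing $u = \sqrt\lambda$ and substituting into the kernel representation of $S_\lambda f$, the principal part reduces the maximal bound to an $L^2(K)$ estimate for the supremum in $u$ of an operator
\begin{equation*}
T_u f(x) = u^{(N-1)/2}\!\int\! \frac{e^{\pm iu|x-y|}}{|x-y|^{(N+1)/2}}\,f(y)\,dy,
\end{equation*}
plus a lower-order remainder treated analogously. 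The supremum in $u$ is then handled by the Garsia-type identity $\sup_{u \geq u_0}|T_u f(x)|^2 \leq |T_{u_0} f(x)|^2 + \int_{u_0}^\infty |T_u f(x)|^2\,u^{-1}\,du + \int_{u_0}^\infty |\partial_u T_u f(x)|^2\,u\,du$; integrated over $x \in K$, each of the three resulting quadratic terms is controlled by Plancherel in the $u$-variable, after which Plancherel in $y$ combined with the decay factor $|x-y|^{-(N+1)/2}$ closes the estimate.

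The main obstacle is the maximal inequality itself, and in particular the passage from the $\mathbb{R}^N$-setup of Carbery--Soria to the periodic setting. The Poisson sum over integer translates converges absolutely thanks to the decay $|z|^{-(N+1)/2}$ in the Bessel asymptotic, but the principal term has to be isolated from the tail while preserving the full oscillation in $u = \sqrt\lambda$, since it is precisely that oscillation which, via Plancherel in $u$, defeats the logarithmic loss inherent in any naive Menshov--Rademacher treatment of $\sup_\lambda$. The fact that $S_\lambda f$ is only a step function of $\lambda$ is not an essential difficulty, because $S_\lambda f$ is constant on intervals between successive values $|n|^2$ and so $\sup_\lambda$ coincides with the supremum of $T_u f$ over the continuous parameter $u$. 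Once the maximal bound is in hand, the density and Banach-principle step of the first paragraph delivers the theorem.
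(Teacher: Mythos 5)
Your reduction in the first paragraph (restrict to a ball, density of smooth functions vanishing near $K$, Banach principle, so that everything rests on the localized maximal inequality) is sound and is exactly how the paper passes from Theorem \ref{MAX} to Theorem \ref{MAIN}. The gap is in your proof of the maximal inequality itself. You propose to transfer the Carbery--Soria argument to the torus through a pointwise asymptotic for the periodic spherical Dirichlet kernel, obtained by ``Poisson summation together with stationary phase'' and claimed to hold uniformly for $|z|\ge\delta$ with error $O(\lambda^{(N-3)/4}|z|^{-(N+3)/2})$. This step is unjustified, and your own justification of it is numerically false: the Poisson series over integer translates has terms of size $\lambda^{(N-1)/4}|z+2\pi m|^{-(N+1)/2}$, and $\sum_{m\in Z^N}|m|^{-(N+1)/2}$ diverges for every $N\ge 2$ (and the error terms $|z+2\pi m|^{-(N+3)/2}$ are likewise non-summable for $N\ge 3$), so the sum does not ``converge absolutely thanks to the decay $|z|^{-(N+1)/2}$.'' With a sharp ball cutoff, Poisson summation can only be used in a regularized sense, and extracting a uniform-in-$\lambda$ principal term plus genuinely lower-order remainder is a serious lattice-point problem (of Gauss-circle type); the irregular distribution of lattice points on spheres $|n|^2=j$ is precisely what distinguishes the series case from the $R^N$ case, and your remainder cannot simply be ``treated analogously.''

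This is the obstruction the paper's proof is built to avoid. Instead of kernel asymptotics, the paper multiplies the Dirichlet kernel by a cutoff $\psi$ vanishing near the origin, works entirely with the Fourier coefficients $(\theta_k)_n$ and the jumps $(\Theta_j)_n=(2\pi)^{-N}\sum_{|n-m|^2=j}\psi_m$, and exploits the rapid decay of $\psi_m$ together with a combinatorial grouping of the Diophantine solutions of $|m-n|^2=k^2+p$ into the sets $Q_q^k$ (each containing fewer than $4\sqrt{q+1}$ values of $p$). These yield the weighted square-function bounds of Lemmas \ref{bigl}--\ref{Lsmall}, which, combined with the elementary identity $F_q^2=\sum_{j<q}[F_{j+1}-F_j][F_{j+1}+F_j]$ applied to $F_q=\theta_q\ast f$ and Plancherel, give (\ref{theta}) without ever estimating the kernel pointwise. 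To repair your argument you would either have to prove a uniform off-diagonal asymptotic for the periodic kernel (which is not available by the route you describe) or replace that step by an argument of the paper's combinatorial type on the Fourier-coefficient side.
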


Thus the problem of generalized localization for $S_\lambda$ is
completely solved in classes $L_p(T^N)$, $p\geq 1$: if $p\geq2$
then we have the generalized localization and if $p<2$, then the
generalized localization fails.

In the study of a.e. convergence it is convenient to introduce the
maximal operator
$$ S_\star f(x)=\sup\limits_{\lambda>0}|S_\lambda
f(x)|. $$

The prove of Theorem \ref{MAIN} is based on the following estimate
of this operator.

\begin{thm}\label{MAX}
Let $f\in L_2(T^N)$ and $f=0$ on the ball $\{|x|<R\}\subset T^N$.
Then for any $r<R$ there exists a constant $C=C(R,r)$, such that
\begin{equation}\label{MES}
\int\limits_{|x|\leq r}|S_\star f(x)|^2 dx\leq C
\int\limits_{T^N}|f(x)|^2 dx.
\end{equation}
\end{thm}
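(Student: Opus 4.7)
The strategy is to convert the maximal estimate into one for the maximal operator of a one–parameter family of oscillatory integrals, and then bound that family in $L^2$ using Plancherel and a one–dimensional Sobolev-type embedding in the spectral parameter. Because $f$ vanishes in a fixed neighborhood of the points $x$ at which we estimate $S_\star f$, the singular behavior of the Dirichlet kernel at the diagonal is avoided, and the proof reduces to an analysis of an oscillatory far-field kernel.

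First I would write $S_\lambda f$ as a convolution,
$$S_\lambda f(x) = (2\pi)^{-N}\int_{T^N} D_\lambda(x-y)\, f(y)\,dy,\qquad D_\lambda(z)=\sum_{|n|^2<\lambda}e^{inz}.$$
Since $\mathrm{supp}\,f\subset\{|y|\geq R\}$ and we evaluate at $|x|\leq r<R$, the argument $z=x-y$ satisfies $d_{T^N}(z,0)\geq R-r=:\delta>0$, so only the far-field part of $D_\lambda$ is relevant. Using Poisson summation together with the Bessel function identity for the Fourier transform of the indicator of a Euclidean ball and the asymptotic of $J_{N/2}$ at infinity, I would then establish the expansion
$$D_\lambda(z) = \lambda^{(N-1)/4}\bigl[a_+(z)\,e^{i\sqrt{\lambda}\,|z|} + a_-(z)\,e^{-i\sqrt{\lambda}\,|z|}\bigr] + E_\lambda(z),$$
uniformly for $z\in T^N$ with $d_{T^N}(z,0)\geq\delta$, where $a_\pm$ are smooth of size $O(|z|^{-(N+1)/2})$ and $E_\lambda$ is of strictly lower order in $\lambda$ (from the $k\neq 0$ Poisson tails and from subleading Bessel terms). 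The error $E_\lambda$ contributes a kernel producing an operator bounded on $L^2$ uniformly in $\lambda$, and can therefore be absorbed.

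Substituting the principal term, setting $\sigma=\sqrt{\lambda}$, the estimate (\ref{MES}) reduces to bounding
$$\int_{|x|\leq r}\sup_{\sigma>0}\Bigl|\sigma^{(N-1)/2}\int a_+(x-y)\,e^{i\sigma|x-y|}f(y)\,dy\Bigr|^2 dx \leq C\,\|f\|_{L^2}^2,$$
and its analogue with $a_-$. To remove the supremum in $\sigma$ inside an $L^2(x)$ norm, I would use the one-dimensional embedding
$$\sup_{\sigma>0}|F(\sigma)|^2 \leq C\int_0^\infty \bigl(|F(\sigma)|^2+|F'(\sigma)|^2\bigr)\,\frac{d\sigma}{1+\sigma^2},$$
applied with $F(\sigma)=\sigma^{(N-1)/2}\int a_+(x-y)e^{i\sigma|x-y|}f(y)\,dy$, then integrate in $x$ and swap order. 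Applying the coarea formula in $y$ rewrites $F(\sigma)$ as a weighted one–dimensional Fourier-type transform of a function on $[\delta,\infty)$ built from $f$; Plancherel in $\sigma$ then converts the resulting double integral into the $L^2$ norm of $f$ with a finite constant depending only on $r$ and $R$. The derivative $\partial_\sigma F$ gives a formally similar oscillatory integral with an extra factor $|x-y|$, which is bounded on $\{|y|\geq R,|x|\leq r\}$, so it is handled identically.

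The main obstacle is ensuring that the growth $\sigma^{(N-1)/2}$ is exactly cancelled: one needs the Jacobian from the coarea formula (which produces $s^{N-1}$) combined with Plancherel in $\sigma$ to absorb the full factor $\sigma^{N-1}$, leaving a constant depending only on $R-r$. Carefully tracking these powers, and verifying that the Poisson summation remainders and the error $E_\lambda$ produce absolutely convergent contributions on the annular region $\{d_{T^N}(z,0)\geq\delta\}$, is the most delicate part of the argument; once these bookkeeping issues are settled, the estimate follows directly from Plancherel and the embedding above.
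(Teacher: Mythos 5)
Your plan transplants the Carbery--Soria argument for the Fourier \emph{integral} case to the torus by asserting a pointwise asymptotic expansion of the spherical Dirichlet kernel $D_\lambda(z)=\sum_{|n|^2<\lambda}e^{inz}$, away from $z=0$, into $\lambda^{(N-1)/4}\bigl[a_+e^{i\sqrt{\lambda}|z|}+a_-e^{-i\sqrt{\lambda}|z|}\bigr]$ plus a ``strictly lower order'' error coming from the nonzero Poisson summation terms. This is exactly where the argument breaks down, and it is the whole difficulty of the theorem. Poisson summation applied to the sharp indicator of the ball of radius $\sqrt{\lambda}$ produces, at each nonzero lattice shift $2\pi m$, a term of the \emph{same} order $\lambda^{(N-1)/4}|z+2\pi m|^{-(N+1)/2}$, and the sum over $m$ of these amplitudes diverges absolutely for $N\geq 2$ (since $(N+1)/2\leq N$); there is no uniformly lower-order $E_\lambda$ on the region $d_{T^N}(z,0)\geq\delta$. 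Behind this is the arithmetic fact that $S_\lambda f$ is a step function of $\lambda$ whose jumps are governed by the number of lattice points on the spheres $|n|^2=j$, a quantity that fluctuates irregularly; taming these fluctuations is precisely what the paper does, by multiplying the kernel with a smooth cutoff $\psi$ vanishing near the origin, estimating the Fourier coefficients $(\theta_k)_n$ (Lemma \ref{coef2}), and then grouping the lattice points on each spherical shell into the cylindrical pieces $Q_q^k$ so as to get the square-function bounds of Lemmas \ref{Sbigl} and \ref{Lsmall}, which feed a telescoping $\sup_q|\theta_q\ast f|^2$ estimate and Plancherel. No analogue of these lattice-point lemmas appears in your outline, so the key input is missing rather than rederived.

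Two further steps would fail even if one granted some version of the expansion. First, you propose to ``absorb'' $E_\lambda$ because it gives operators bounded on $L^2$ uniformly in $\lambda$; but uniform boundedness at each fixed $\lambda$ says nothing about $\sup_{\lambda}|E_\lambda\ast f|$, and a maximal (or square-function) estimate for the error is needed — this is the same difficulty as the original problem. Second, your Sobolev embedding in $\sigma=\sqrt{\lambda}$ requires $\partial_\sigma F$, while in the series setting the dependence on $\lambda$ is piecewise constant with jumps of size tied to the representation numbers $r_N(j)$; differentiating only the smooth main term pushes all of this singular dependence back into the untreated error. So the proposal is not a correct proof: it imports the integral-case kernel asymptotics into a setting where they are unavailable, and it is precisely at that point that the paper substitutes an arithmetic/geometric analysis of lattice points on spheres.
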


The formulated  theorems are easily transferred to the case of non-spherical partial sums of multiple Fourier series (see \cite{AAA}, \cite{AB}).

\section{Auxiliary assertions}

The proofs of Theorems \ref{MAIN} and \ref{MAX} are based on several auxiliary assertions,
which are given in this section. Here we have borrowed some original
ideas from A. Carbery and F. Soria \cite{CAR}, where the authors
have investigated the multiple Fourier
integrals.

So we assume that $f=0$ on the fixed ball $\{|x|<R\}\subset T^N$
and fix a number $r<R$.

Let $\chi_b(t)$ be the characteristic function of the segment
$[0,b]$. We denote by $\varphi_1(t)$ a smooth function with
$\chi_{(R-r)/3}(t)\leq \varphi_1(t)\leq \chi_{2(R-r)/3}(t)$ and
put $\varphi_2(t)=1-\varphi_1(t)$. Now we define a new function
$\psi(x)$ as follows: $\psi(x)=\varphi_2(|x|)$, when $x\in T^N$
and otherwise it is a $2\pi$ - periodical on each variable $x_j$
function.

Let us denote
$$
\theta(x,\lambda)=(2\pi)^{-N}\sum\limits_{|n|^2< \lambda} e^{inx}.
$$
Then by definition of the Fourier coefficients we may write
$$
S_\lambda f(x)=\int\limits_{T^N}\theta(x-y,\lambda)f(y)dy.
$$
If we define $\theta_\lambda(x)=\theta(x,\lambda)\psi(x)$, then we
have
$$
S_\lambda f(x)=\int\limits_{T^N}\theta_\lambda(x-y)f(y)dy, \,\,for
\,\,all\,\, x,\,\,with \,\,|x|\leq r,
$$
since $f$ is supported in $\{|x|\geq R\}$. Therefore to prove the
estimate (\ref{MES}) it suffices to obtain the inequality
\begin{equation}\label{theta}
\int\limits_{T^N}\sup_{q >\,0}\left|\theta_q\ast f\right|^2dx\leq
C\int\limits_{T^N}|f(x)|^2dx,
\end{equation}
where $\sup$ is taken over all integers.

Now we need some estimates for the Fourier coefficients of the
function $\theta_k(x)$, which we denote by $(\theta_k)_n$.

\begin{lem}\label{coef2}
For an arbitrary integer $l$ there exists a constant $C_l$,
depending on $l, \, r $ and $R$, such that for all $k\geq 0$ and
$n\in Z^N$ one has
$$
|(\theta_k)_n|\leq\frac{C_l}{(1+||n|-\sqrt{k}|)^l}.
$$
\end{lem}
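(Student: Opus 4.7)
My plan is to expand $\theta_k(x) = \theta(x,k)\psi(x)$ as a product of Fourier series and interpret $(\theta_k)_n$ as a discrete convolution of the lattice indicator $\chi_{\{|m|^2<k\}}$ with the Fourier coefficients of $\psi$. Writing $\psi(x) = \sum_j \psi_j e^{ijx}$ and using $\theta(x,k) = (2\pi)^{-N}\sum_{|m|^2<k}e^{imx}$, a direct multiplication gives
\begin{equation*}
(\theta_k)_n = (2\pi)^{-N}\sum_{|m|^2<k}\psi_{n-m}.
\end{equation*}
The strategy is to exploit rapid decay of $\psi_j$, keeping in mind that $\psi_{n-m}$ is small precisely when $m$ is far from $n$.

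Two properties of $\psi$ will drive the argument, and verifying them is where the only real care is needed. First, $\psi$ is smooth on $T^N$: it vanishes identically near the origin (because $\varphi_1\equiv 1$ on $[0,(R-r)/3]$, which neutralizes the singularity of $|x|$ at $0$), and it equals $1$ near the boundary of the fundamental cube (since $R\leq\pi$ forces $2(R-r)/3<\pi$), so the $2\pi$-periodic extension is smooth. Hence $|\psi_j|\leq A_L(1+|j|)^{-L}$ for every $L$. Second, $\psi(0)=\varphi_2(0)=1-\varphi_1(0)=0$, which, by evaluating the absolutely convergent Fourier series at $x=0$, yields
\begin{equation*}
\sum_{m\in Z^N}\psi_{n-m} = \psi(0) = 0,
\end{equation*}
so that $\sum_{|m|^2<k}\psi_{n-m}$ also equals $-\sum_{|m|^2\geq k}\psi_{n-m}$.

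With $D=||n|-\sqrt{k}|$, I would split into two cases according to whether $n$ lies outside or inside the ball $\{|m|<\sqrt{k}\}$, working in each case with whichever of the two representations above forces $m$ to be far from $n$. For $|n|>\sqrt{k}$ every $|m|<\sqrt{k}$ satisfies $|n-m|\geq D$; for $|n|\leq\sqrt{k}$ every $|m|\geq\sqrt{k}$ satisfies $|n-m|\geq D$. Taking $L=l+N+1$ and splitting $(1+|n-m|)^{-(l+N+1)}\leq(1+D)^{-l}(1+|n-m|)^{-(N+1)}$, the residual lattice sum $\sum_m(1+|n-m|)^{-(N+1)}$ converges to a dimensional constant, giving the required estimate $|(\theta_k)_n|\leq C_l(1+D)^{-l}$. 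The main obstacle is really just the smoothness verification and the cancellation observation $\psi(0)=0$; the decay estimate itself then amounts to a routine split of rapidly decaying Fourier coefficients against the distance from $n$ to the sphere of radius $\sqrt{k}$.
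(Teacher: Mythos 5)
Your proposal is correct and follows essentially the same route as the paper: the same convolution formula $(\theta_k)_n=(2\pi)^{-N}\sum_{|m|^2<k}\psi_{n-m}$, the same two-case split according to whether $|n|>\sqrt{k}$ or $|n|\leq\sqrt{k}$, with the cancellation $\sum_m\psi_m=\psi(0)=0$ used in the second case, and the rapid decay of the Fourier coefficients of the smooth periodic function $\psi$. Your explicit smoothness check for $\psi$ and the splitting $(1+|n-m|)^{-(l+N+1)}\leq(1+D)^{-l}(1+|n-m|)^{-(N+1)}$ just spell out details the paper handles by citing smoothness and comparing the tail sum with an integral.
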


\begin{proof} Let $\{\psi_m\}$ be the Fourier coefficients of the function
$\psi(x)$. Then
$$
(\theta_k)_n=(2\pi)^{-2N}\int\limits_{T^N}\sum\limits_{|m|<\sqrt{k}}
e^{imx}\psi(x)e^{-inx}dx=(2\pi)^{-N}\sum\limits_{|n-m|<\sqrt{k}}\psi_{m}.
$$
If $|n|>\sqrt{k}$ then we have
$$
(2\pi)^{-N}|\sum\limits_{|n-m|<\sqrt{k}}\psi_{m}|\leq
(2\pi)^{-N}\sum\limits_{|m|>|n|-\sqrt{k}}|\psi_{m}|.
$$
Similarly, if $|n|\leq \sqrt{k}$ then making use of the equality
(observe, $\psi$ is an infinitely differentiable and $2\pi$ -
periodical function) $\sum\psi_m=\psi(0)=0$, we obtain
$$
(2\pi)^{-N}|\sum\limits_{|n-m|<\sqrt{k}}\psi_{m}|=
(2\pi)^{-N}|-\sum\limits_{|n-m|\geq\sqrt{k}}\psi_{m}|\leq
(2\pi)^{-N}\sum\limits_{|m|\geq\sqrt{k}-|n|}|\psi_{m}|.
$$

Now it is sufficient to  note that for any integer $j\geq 0$ there
exists a constant $c_j$, depending on $(R-r)$, such that
\begin{equation}\label{psi}
|\psi_m|\leq\frac{c_j}{(1+|m|)^j},
\end{equation}
and to estimate the last sum by comparing it with the
corresponding integral.
\end{proof}

We will apply the estimate (\ref{psi}) further, so the
corresponding constants will depend on $r$ and $R$. In addition,
as we have done above, in order to estimate number series we
compare them with the corresponding integrals.

Let $(\Theta_j)_n=(\theta_{j+1})_n-(\theta_{j})_n$, that is,
$$
(\Theta_{j})_n= (2\pi)^{-N}\sum\limits_{
|m|^2=j}\psi_{m-n}=(2\pi)^{-N}\sum\limits_{
|n-m|^2=j}\psi_{m}
$$
(if the Diophantine equation $|m-n|^2=j$ does not have a solution,
then $(\Theta_{j})_n=0$). These numbers have a better estimate
than $(\theta_j)_n$ in the following sense. Suppose
$k\leq\sqrt{j}< k+1$, i.e. $k^2\leq j< k^2+2k+1$, or
$j=k^2+p,\,0\leq p< 2k+1$, then according to Lemma \ref{coef2},
$(\theta_j)_n$ has the same estimate. But, as we will see below,
the numbers $(\Theta_j)_n$ vanish in the same interval  in some
sense. In particular, the following statement is true.

\begin{lem}\label{bigl}

For any $l$, there exists a constant $C_l$ such that
\begin{equation}\label{Big}
\sum\limits_{k\leq\sqrt{j}<k+1}|(\Theta_j)_n|^2\leq
\frac{C_l}{(1+||n|-k|)^l}.
\end{equation}
\end{lem}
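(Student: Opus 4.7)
My plan is to apply Cauchy--Schwarz to each $|(\Theta_j)_n|^2$, swap summations to reduce the inequality to a lattice sum over the annular shell $T_k := \{m \in Z^N : k \leq |n-m| < k+1\}$, and then exploit the rapid decay of $\psi_m$ from estimate (\ref{psi}) together with the reverse triangle inequality $|m| \geq ||n|-|n-m||$.

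Concretely, Cauchy--Schwarz yields
\[
|(\Theta_j)_n|^2 \leq (2\pi)^{-2N}\, r_N(j) \sum_{|n-m|^2 = j} |\psi_m|^2,
\]
where $r_N(j) := \#\{q \in Z^N : |q|^2 = j\}$ is bounded crudely by $C(k+1)^{N-1}$ on the range $k^2 \leq j < (k+1)^2$. Summing over $j$ in this range and swapping the order of summation gives
\[
\sum_{k \leq \sqrt{j} < k+1} |(\Theta_j)_n|^2 \leq C(k+1)^{N-1} \sum_{m \in T_k} |\psi_m|^2.
\]
For $m \in T_k$, the reverse triangle inequality produces $|m| \geq ||n|-k|-1 = M-2$ with $M := 1 + ||n|-k|$, so $(1+|m|) \geq M/2$ whenever $M \geq 4$. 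Factoring the decay as $(1+|m|)^{-2L} = (1+|m|)^{-L_1}(1+|m|)^{-L_2}$, pulling out the factor $(1+|m|)^{-L_1} \leq C M^{-L_1}$, and summing the remaining $(1+|m|)^{-L_2}$ over $Z^N$ (finite for $L_2 > N$, by the integral comparison already used in the proof of Lemma \ref{coef2}) yields
\[
\sum_{k \leq \sqrt{j} < k+1} |(\Theta_j)_n|^2 \leq C_L (k+1)^{N-1} M^{-L_1}.
\]
Choosing $L_1 \geq l + N - 1$, the factor $(k+1)^{N-1}$ is absorbed by $M^{N-1}$ in the regime $M \geq c(k+1)$, producing the claimed bound $C_l/M^l$.

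The principal obstacle is the regime in which $|n|$ stays close to $k$ while $k$ tends to infinity, so that $M$ remains bounded but $(k+1)^{N-1}$ in the displayed bound is not. There the reverse triangle inequality is useless -- the shell $T_k$ now contains lattice points close to the origin -- and the $r_N(j)$ factor in Cauchy--Schwarz becomes too lossy. To handle this case I would exploit the near-radial structure of $\psi_m$: since $\psi(x) = 1 - \varphi_1(|x|)$, one has $\psi_m = \delta_{m,0} - \tilde\Phi(|m|)$, where $\tilde\Phi$ is the Schwartz Fourier transform of the radial bump $\varphi_1(|\cdot|)$. The dominant contributions to $(\Theta_j)_n$ then come from the few lattice points $m$ lying close to the hyperplane through the origin perpendicular to $n$, and their total contribution, summed over $j$ in the range, is controlled by a constant through the rapid decay of $\tilde\Phi$ together with a codimension-two lattice-point count; this matches $C_l/M^l$ when $M$ is bounded. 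I expect carrying out this refined analysis to be the main technical step.
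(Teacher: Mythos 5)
There is a genuine gap, and it sits exactly where you yourself placed the ``main technical step.'' Your Cauchy--Schwarz step with the multiplicity factor $r_N(j)\leq C(k+1)^{N-1}$ proves the claim only when $M:=1+||n|-k|$ is comparable to $k$; for any \emph{fixed} $L_1$ the absorption $(k+1)^{N-1}\leq CM^{L_1-l}$ fails as soon as $M\leq c(k+1)^{(N-1)/(L_1-l)}$, so the uncovered region is not just ``$M$ bounded'' but includes, say, $M\sim\log k$. Worse, in the critical regime the loss from Cauchy--Schwarz is irreparable: take $n=(k,0,\dots,0)$, so $M=1$ and the right-hand side of (\ref{Big}) is a constant, and note that $m=0$ lies on the sphere $|n-m|^2=k^2$, with $\psi_0=1-(2\pi)^{-N}\int\varphi_1(|x|)\,dx>0$. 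Then your displayed intermediate bound is at least $c\,(k+1)^{N-1}|\psi_0|^2\to\infty$ (and even with the true $r_N(k^2)$ in place of the crude bound it grows like $k^{N-2}$ for $N\geq5$), while the quantity it is supposed to dominate must stay bounded. Hence no refined analysis of the structure of $\psi_m$ (radial form of $\widehat{\varphi_1}$, hyperplane heuristics, codimension-two lattice counts) can recover (\ref{Big}) \emph{after} that Cauchy--Schwarz has been applied; the step must simply be abandoned in that regime, and the replacement argument you offer there is only a plan, which you flag as not carried out.

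The paper's proof shows that none of this machinery is needed, because it never passes to $\ell^2$ sums of $|\psi_m|^2$ with a multiplicity factor. Since $|(\Theta_j)_n|\leq(2\pi)^{-N}\sum_m|\psi_m|\leq C$ uniformly in $j,n$, one has $|(\Theta_j)_n|^2\leq C|(\Theta_j)_n|$, and the first power is summed over the shell directly:
\[
\sum_{k\leq\sqrt{j}<k+1}|(\Theta_j)_n|\;\leq\;(2\pi)^{-N}\sum_{k\leq|n-m|<k+1}|\psi_m|\;\leq\;(2\pi)^{-N}\sum_{|m|\geq||n|-k|-1}|\psi_m|\;\leq\;\frac{C_l}{(1+||n|-k|)^l},
\]
using the reverse triangle inequality, the rapid decay (\ref{psi}) with a sufficiently large exponent, and comparison with an integral, exactly as in Lemma \ref{coef2}. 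This one-line $\ell^1$ argument treats all regimes uniformly: when $|n|$ is close to $k$ the target bound is of constant size and the finiteness of $\sum_m|\psi_m|$ already suffices, with no lattice-point counting at all. In short, the total mass of $\{\psi_m\}$ is precisely the information your Cauchy--Schwarz step discards, and it is the only information needed.
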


\begin{proof} Let $|n|\leq k$; otherwise estimates are similar. By virtue of estimate (\ref{psi}) we have
$$
\sum\limits_{k\leq\sqrt{j}<k+1}|(\Theta_j)_n|\leq
(2\pi)^{-N}\sum\limits_{ k\leq |n-m|<k+1}|\psi_{m}|\leq
$$
$$
\leq
(2\pi)^{-N}\sum\limits_{|m|>||n|-k|}\,\,\frac{c_j}{(1+|m|)^j}\leq
\frac{C_l}{(1+||n|-k|)^l}.
$$
Since $|(\Theta_{j})_n|^2\leq C|(\Theta_{j})_n|$, Lemma is proved.
\end{proof}

\begin{cor}\label{LBig} Uniformly on $n$ one has

$$\sum\limits_{j=0}^\infty|(\Theta_{j})_n|^2=\sum\limits_{k=0}^\infty\sum\limits_{k\leq\sqrt{j}<k+1}|(\Theta_j)_n|^2\leq C.$$

\end{cor}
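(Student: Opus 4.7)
The plan is to obtain the corollary as an essentially immediate consequence of Lemma \ref{bigl}, using only two ingredients: a reorganization of the sum over $j$ into blocks indexed by $k=\lfloor\sqrt{j}\rfloor$, and the summability in $k$ of the bound supplied by the lemma.

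First I would justify the displayed equality. Every nonnegative integer $j$ lies in exactly one of the half-open intervals $[k^2,(k+1)^2)$, $k=0,1,2,\dots$, and $k\leq \sqrt{j}<k+1$ is precisely this condition. Therefore the single sum over $j\geq 0$ can be rewritten as the iterated sum on the right-hand side; this is just a partition of the index set and does not involve any estimates.

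Next I would apply Lemma \ref{bigl} with an exponent $l$ chosen large enough (say $l=2$, although any $l\geq 2$ works) to make the outer series in $k$ convergent. The lemma yields
$$
\sum_{k=0}^\infty\sum_{k\leq\sqrt{j}<k+1}|(\Theta_j)_n|^2\leq C_l\sum_{k=0}^\infty \frac{1}{(1+||n|-k|)^l}.
$$
The right-hand side is bounded uniformly in $n$: shifting the index by the (fixed) nonnegative integer part of $|n|$ and splitting into the contributions from $k\leq |n|$ and $k>|n|$, one compares with $2\sum_{m\geq 0}(1+m)^{-l}$, which is finite for $l\geq 2$. (As the authors have already remarked, the discrete sum can equivalently be compared with the corresponding integral $\int_0^\infty(1+t)^{-l}\,dt$.)

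The only subtle point, if it even deserves the name, is insisting on the \emph{uniformity} of the bound in $n$, which is why one needs decay of the form $(1+||n|-k|)^{-l}$ with $l\geq 2$ rather than the weaker $l=1$; Lemma \ref{bigl} delivers an arbitrary $l$, so this is free. No other obstacle arises.
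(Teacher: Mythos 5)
Your proof is correct and is exactly the argument the paper intends: the corollary is stated as an immediate consequence of Lemma \ref{bigl}, obtained by partitioning the $j$'s into the blocks $k\leq\sqrt{j}<k+1$ and summing the bound $C_l(1+||n|-k|)^{-l}$ over $k$ with $l\geq 2$, which is uniformly bounded in $n$.
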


If we properly group the numbers $(\Theta_{k^2+p})_n$  by parameter $p$, then
a stronger result than Lemma \ref{bigl} can be obtained. Our
nearest aim is to implement this grouping.

Denote by $y_0$ (the nearest one to the origin) the intersection point of
the ball $\{x\in R^N: |x-n|\leq k+1\}$ with the straight line $On$ that passes through the origin and point $n$. Let $T_{y_0}$ be the
tangential hyperplane to the ball $\{x\in R^N: |x-n|\leq k+1\}$ at
the point $y_0$. Let $B_0:=\{y\in T_{y_0}: |y-y_o| < 1\}$ and $B_j:=\{y\in T_{y_0}:
\sqrt{j}\leq |y-y_o| < \sqrt{j+1}\,\}$, where $j=1,2, \cdot\cdot\cdot, 2k-1$. Let $C^k_j$, $j=0, 1,
\cdot\cdot\cdot, 2k-1$, be the $N-$ dimensional cylinders with the base
$B_j$ and with the axis parallel to $On$ and the length $|n|$.
Consider the ring $K=\{x\in R^N: k\leq |x-n| < (k+1)\}$
and divide it in to the following sets: $P_j^k = K\cap C_j^k$,
$j=0, 1, \cdot\cdot\cdot, 2k-1$.

Let us define the sets $Q_q^k$, $q =0, 1, \cdot\cdot\cdot, 2k-1$,
as follows. Let $Q_q^k$ be the set of those integers $p$, $0\leq
p\leq 2k$, for which the Diophantine equation $|m-n|^2=k^2+p$ has a solution in $P_q^k$. If $P_q^k$
does not contain any of solutions of equation $|m-n|^2=k^2+p$, for any $p$,
then we assign to the set $Q_q^k$ one of those parameters $p$ that are not included in the previous sets $Q_j^k$,
$j =0, 1, \cdot\cdot\cdot, q-1$. If there are no such
 $p's$ left, then we define $Q_j^k$, $j =q, q+1, \cdot\cdot\cdot,
2k-1$ as empty set.

In the proof of Lemma \ref{Lsmall} we need to know  how many at most
parameters $p$ does the set $Q_q^k$ contain. Observe, if we fix
$y\in On$, then the Diophantine equation $m\in Z^N, |m-y|^2=t,\,\,
q\leq t< q+1$ may have a solution only for one $t$ (note, in fact,
here it suffices to consider the "projection" of this equation
onto the hyperplane passing through the point $y$ and parallel to
$T_{y_0}$). The length of the projection of $P_q^k$ on the axis of
$Ox_1$ is less than $2\sqrt{q+1}$; (without loss of generality, we
can assume that the angle between $On$ and $Ox_1$ is less than or
equal to $\frac{\pi}{4}$). Consequently, if, for a fixed $p$,
there is a solution of the Diophantine equation $|m-n|^2=k^2+p$,
provided $m\in P_q^k$, then the first coordinates $m_1$ of the
numbers $m$, take less than $[2\sqrt{q+1}\,]$ ($[a]$ is the
integer part of the number $a$) different values. When $p$ varies
from $0$ to $2k$, then each of these numbers $m_1$ can take at
most two adjacent integer numbers. Hence each set $Q_q^k$ has less
than $4\sqrt{q+1}$ parameters $p$ with the above property .

With this choice of $Q_q^k$ we have the following statement.

\begin{lem}\label{nk} Let $q =0,1,\cdot\cdot\cdot, 2k-1$ and $S_p = \{m\in Z^N:
|m-n|^2 = k^2+p\}$ ($p = 0, 1, \cdot\cdot\cdot, 2k$). If $|n|\geq
k+1$, then
\begin{equation}\label{mn2}
\min\limits_{m\in S_p,\,p\in Q_q^k}|m|\geq \sqrt{(|n|-k-1)^2+q}.
\end{equation}
If $k< |n|< k+1$, then
$$
\min\limits_{m\in S_p,\,p\in Q_q^k}|m|\geq \sqrt{q}.
$$
If $|n|\leq k$, then
$$
\min\limits_{m\in S_p,\,p\in Q_q^k}|m|\geq
\frac{1}{2}\sqrt{(|n|-k)^2+q}.
$$
\end{lem}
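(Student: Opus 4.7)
\smallskip
\noindent\textbf{Proof plan.} The plan is to reduce the lemma to a few one-line inequalities after choosing adapted coordinates. I place $n$ on a coordinate axis, $n=|n|\,e_1$, so $y_0=(|n|-k-1)e_1$, the tangent plane is $T_{y_0}=\{x_1=|n|-k-1\}$, and the cylinder $C_q^k$ (axis $e_1$, length $|n|$, base in $T_{y_0}$, oriented so that it extends from $T_{y_0}$ towards $n$) is the set of points whose first coordinate lies in $[|n|-k-1,\,2|n|-k-1]$ and whose last $N-1$ coordinates $y=(m_2,\dots,m_N)$ satisfy $q\le|y|^2<q+1$. Any lattice point $m\in P_q^k\cap S_p$ therefore satisfies the three inequalities
\begin{equation*}
m_1\ge|n|-k-1,\qquad |y|^2\ge q,\qquad |m-n|^2=k^2+p\ge k^2,
\end{equation*}
together with the Pythagorean identity $|m|^2=m_1^2+|y|^2$.

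The first two cases of the lemma fall out immediately from these. When $|n|\ge k+1$, the lower bound $m_1\ge|n|-k-1\ge 0$ gives $m_1^2\ge(|n|-k-1)^2$; adding $|y|^2\ge q$ yields the additive estimate \eqref{mn2}. When $k<|n|<k+1$, the cylinder bound on $m_1$ turns negative and is useless, but the trivial estimate $|m|^2\ge|y|^2\ge q$ is already the claim.

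The main difficulty will be the third case, $|n|\le k$. Here the cylinder's $e_1$-range $[|n|-k-1,\,2|n|-k-1]$ contains $0$ whenever $(k+1)/2\le|n|\le k$, so $m_1^2$ can be arbitrarily small and the cylinder constraint on $m_1$ gives no useful lower bound on $|m|$. My strategy is to discard this constraint altogether and use only the sphere condition $|m-n|\ge k$ together with $|n|\le k$: the reverse triangle inequality supplies $|m|\ge|m-n|-|n|\ge k-|n|\ge 0$, while separately $|m|\ge|y|\ge\sqrt{q}$. Combining the two lower bounds via the elementary inequality $\max(A,B)^2\ge\tfrac12(A^2+B^2)$ yields $|m|^2\ge\tfrac12\bigl((|n|-k)^2+q\bigr)$, which is in fact slightly stronger than the factor $\tfrac12$ in front of the square root claimed in the lemma.
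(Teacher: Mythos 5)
Your proof is correct and, in the first two cases, follows the paper's own route: pass to coordinates adapted to the line $On$, interpret the minimum as the distance from the origin to the lattice points of $P_q^k$, and read off $m_1\ge |n|-k-1$ and $|y|^2\ge q$, which immediately gives $\sqrt{(|n|-k-1)^2+q}$ when $|n|\ge k+1$ and $\sqrt q$ when $k<|n|<k+1$ (the paper phrases this as the distance from the origin to the base annulus $B_q$). Where you genuinely differ is the case $|n|\le k$: the paper computes the minimal distance to the relaxed constraint set exactly, namely $\sqrt{(|n|-\sqrt{k^2-q})^2+q}$, and then asserts without detail that this is at least $\tfrac12\sqrt{(|n|-k)^2+q}$; you instead discard the axial geometry and keep only the two crude bounds $|m|\ge|m-n|-|n|\ge k-|n|$ and $|m|\ge|y|\ge\sqrt q$, combined through $\max(A,B)^2\ge\tfrac12(A^2+B^2)$. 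This is shorter, replaces the paper's unproved comparison step by a one-line inequality, and even yields the slightly better constant $1/\sqrt2$ in place of $\tfrac12$. One remark: like the paper's own proof, your argument bounds $|m|$ only for $m\in S_p\cap P_q^k$, whereas the statement formally takes the minimum over all of $S_p$ (including the ``leftover'' parameters $p$ assigned to $Q_q^k$ when no solution lies in $P_q^k$); that imprecision is the paper's, and your reading coincides with the one actually used in its proof.
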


\begin{proof}  Note that it is sufficient to estimate the minimum distance from the origin to the set $P_q^k$. If $|n|\geq
k+1$, then it is not hard to verify that the distance from the origin to the set $B_q$ is equal to $\sqrt{(|n|-k-1)^2+q}$.
Obviously, this value is less or equal to the distance between the origin and $P_q^k$. In case of $k< |n|< k+1$ 
arguments are similar.

If $|n|\leq k$, then minimum distance from
the origin to the set $P_q^k$ is less than or equal to $\sqrt{(|n|-\sqrt{k^2-q})^2+q}$. But we can estimate 
this number from below by $\frac{1}{2}\sqrt{(|n|-k)^2+q}$.  \end{proof}

As we mentioned above for $(\Theta_{j})_n$ one has a more stronger
result than Lemma \ref{bigl}.

\begin{lem}\label{Sbigl}

For any $l$, there exists a constant $C_l$ such that
\begin{equation}\label{SBig}
\sum\limits_{q=0}^{2k-1}(q+1)^2\sum\limits_{p\in
Q_q^k}|(\Theta_{k^2+p})_n|^2\leq \frac{C_l}{(1+\sqrt{||n|-k|})^l}.
\end{equation}
\end{lem}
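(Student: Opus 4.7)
The plan is to combine three ingredients already in hand: the formula $(\Theta_{k^2+p})_n=(2\pi)^{-N}\sum_{m\in S_p}\psi_m$, the rapid decay (\ref{psi}), and the lower bound on $|m|$ furnished by Lemma \ref{nk}, together with the cardinality count $|Q_q^k|\leq 4\sqrt{q+1}$ established just above the lemma's statement. Focusing first on the main case $|n|\geq k+1$ and fixing $p\in Q_q^k$, Lemma \ref{nk} asserts that every $m\in S_p$ satisfies $|m|\geq M_q:=\sqrt{(|n|-k-1)^2+q}$. Using (\ref{psi}) with $j=L+N+1$ and an integral comparison exactly as in the proof of Lemma \ref{coef2}, this produces the per-term decay
$$|(\Theta_{k^2+p})_n|\leq (2\pi)^{-N}\sum_{|m|\geq M_q}\frac{c_j}{(1+|m|)^j}\leq \frac{C_L}{(1+M_q)^L}$$
for any prescribed $L$.

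Squaring and summing over $p\in Q_q^k$, and invoking $|Q_q^k|\leq 4\sqrt{q+1}$, one obtains
$$\sum_{p\in Q_q^k}|(\Theta_{k^2+p})_n|^2\leq \frac{4C_L^2\sqrt{q+1}}{(1+M_q)^{2L}}.$$
Multiplying by $(q+1)^2$ and summing over $q$ then bounds the left-hand side of (\ref{SBig}) by
$$4C_L^2\sum_{q\geq 0}\frac{(q+1)^{5/2}}{\bigl(1+\sqrt{(|n|-k-1)^2+q}\,\bigr)^{2L}}.$$
A routine integral comparison, splitting the range at $q=(|n|-k-1)^2$, shows that this is dominated by a constant multiple of $(1+||n|-k|)^{7-2L}$; choosing $L$ so that $2L\geq l/2+7$ then delivers the required bound $C_l(1+\sqrt{||n|-k|})^{-l}$. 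The two remaining cases $k<|n|<k+1$ and $|n|\leq k$ are handled identically, using the corresponding inequalities from Lemma \ref{nk}; the factor $\tfrac{1}{2}$ appearing there is harmless since all constants may depend on $R,r,l$.

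The main obstacle I expect is the delicate bookkeeping in this final integral comparison, because the weight $(q+1)^{5/2}$ is large and has to be beaten by the polynomial decay in $q$. On $q\leq(|n|-k-1)^2$ the denominator is essentially $(1+||n|-k|)^{2L}$ and the summed weight contributes $(|n|-k-1)^{7}$, so the piece yields $(1+||n|-k|)^{7-2L}$; on $q>(|n|-k-1)^2$ the $q$-dependence dominates and the piece contributes the same order. The weaker exponent $l/2$ (rather than $l$) on the right-hand side of (\ref{SBig}) is exactly the price paid for the additional weight $(q+1)^2$ that did not appear in Lemma \ref{bigl}. Apart from this bookkeeping, the only nontrivial input is Lemma \ref{nk}, which converts the geometric construction of $Q_q^k$ via the cylinders $C_q^k$ into the quantitative lower bound on $|m|$ that drives the per-term decay.
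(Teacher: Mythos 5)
Your proposal is correct, and its essential engine is the same as the paper's: the rapid decay (\ref{psi}) of the coefficients $\psi_m$ combined with the lower bound of Lemma \ref{nk} on $|m|$ for $m\in S_p$, $p\in Q_q^k$, followed by an integral comparison in $q$; your exponent bookkeeping (the split at $q=(|n|-k-1)^2$, the loss of a factor $(q+1)^{5/2}$, and the choice $2L\geq l/2+7$, which indeed gives $2(2L-7)\geq l$) checks out, and the appearance of $\sqrt{||n|-k|}$ rather than $||n|-k|$ on the right-hand side is exactly as in the paper. Where you genuinely deviate is in how the square and the sum over $p$ are handled. The paper never invokes the cardinality bound $|Q_q^k|<4\sqrt{q+1}$ in this lemma: it estimates the weighted sum of the \emph{first} powers $\sum_q (q+1)^2\sum_{p\in Q_q^k}|(\Theta_{k^2+p})_n|$ by merging the double sum over $p$ and over $m$ with $|m-n|^2=k^2+p$ into a single tail sum $\sum_{|m|\geq\sqrt{(|n|-k-1)^2+q}}|\psi_m|$ (the spheres $S_p$ are disjoint in $m$), and then passes to squares via the trivial observation $|(\Theta_j)_n|^2\leq C|(\Theta_j)_n|$; the counting estimate is reserved for Lemma \ref{Lsmall}, where no analogue of $(\Theta_j)_n$-cancellation is available. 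Your route instead bounds each $|(\Theta_{k^2+p})_n|$ individually by the tail $C_L(1+M_q)^{-L}$, squares, and pays the extra factor $4\sqrt{q+1}$ from the cardinality of $Q_q^k$. This costs an additional power of $\sqrt{q+1}$ in the $q$-sum, which your large choice of $L$ absorbs harmlessly, and it spares you the (admittedly one-line) reduction of squares to first powers; the paper's version is marginally leaner in that it needs neither the count nor the extra loss, but both arguments rest on the same geometric input and yield the same estimate.
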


\begin{proof} From the definition of $(\Theta_{j})_n$ one has
$$
\sum\limits_{q=0}^{2k-1}(q+1)\sum\limits_{p\in
Q_q^k}|(\Theta_{k^2+p})_n|\leq
(2\pi)^{-N}\sum\limits_{q=0}^{2k-1}(q+1)^2\sum\limits_{p\in
Q_q^k}\,\,\,\sum\limits_{|m-n|^2=k^2+p}|\psi_m|\leq
$$
(and by virtue of estimates (\ref{psi}) and  (\ref{mn2}) (we
assume that $|n|\geq k+1 $; otherwise arguments are similar) we
finally have)

$$
\leq \sum\limits_{q=0}^{2k-1}(q+1)^2\sum\limits_{|m|\geq
\sqrt{(|n|-k-1)^2+q}}\,\,\frac{c_j}{(1+|m|)^j}\leq
\frac{C_l}{(1+\sqrt{||n|-k|})^l}.
$$

Now (\ref{SBig}) follows from the estimate $|(\Theta_{j})_n|^2\leq
C|(\Theta_{j})_n|$.
\end{proof}

Next statement is an easy consequence of this Lemma.

\begin{cor}\label{CSbigl} Uniformly on $n$, one has
\begin{equation}\label{CSBig}
\sum\limits_{k=0}^{\infty}\sum\limits_{q=0}^{2k-1}(q+1)^2\sum\limits_{p\in
Q_q^k}|(\Theta_{k^2+p})_n|^2\leq C.
\end{equation}

\end{cor}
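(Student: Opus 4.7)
The plan is to sum the estimate from Lemma \ref{Sbigl} over $k$ and show the resulting series is uniformly bounded in $n$. Concretely, Lemma \ref{Sbigl} gives, for each $l$ and each $k \geq 0$,
$$\sum_{q=0}^{2k-1}(q+1)^2 \sum_{p\in Q_q^k}|(\Theta_{k^2+p})_n|^2 \leq \frac{C_l}{(1+\sqrt{||n|-k|})^l},$$
so the corollary reduces to verifying that, for a suitable choice of $l$,
$$\sum_{k=0}^\infty \frac{1}{(1+\sqrt{||n|-k|})^l} \leq C \quad \text{uniformly in } n.$$

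To control this sum, I would split it at $k = [|n|]$. For the tail $k > |n|$, substitute $j = k - [|n|]$, so the terms are bounded (up to a constant depending on the fractional part of $|n|$, which is harmless) by $\sum_{j\geq 0}(1+\sqrt{j})^{-l}$. For the part $k \leq [|n|]$, write $k = [|n|] - j$ with $0 \leq j \leq [|n|]$, and similarly the terms are bounded by $\sum_{j\geq 0}(1+\sqrt{j})^{-l}$. In either piece I would compare the series with the integral $\int_0^\infty (1+\sqrt{t})^{-l}\,dt$, exactly the kind of integral comparison the author announced right after Lemma \ref{coef2}. Changing variables $u = \sqrt{t}$ gives $2\int_0^\infty u(1+u)^{-l}\,du$, which converges whenever $l > 2$.

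Thus, by choosing $l = 3$ (or any fixed $l>2$) in Lemma \ref{Sbigl}, the double estimate goes through and
$$\sum_{k=0}^\infty\sum_{q=0}^{2k-1}(q+1)^2\sum_{p\in Q_q^k}|(\Theta_{k^2+p})_n|^2 \leq C_3 \sum_{k=0}^\infty \frac{1}{(1+\sqrt{||n|-k|})^3} \leq C,$$
with $C$ independent of $n$, which is exactly (\ref{CSBig}).

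There is no real obstacle here: the work was done in Lemma \ref{Sbigl}, and the only thing to check is that the $k$-sum of $(1+\sqrt{||n|-k|})^{-l}$ is summable uniformly in $n$. The critical observation is that the denominator has $\sqrt{||n|-k|}$, not $||n|-k|$, so one must take $l > 2$ (not merely $l > 1$) to win the integral comparison; this is why the stronger statement in Lemma \ref{Sbigl}, obtained via the grouping into the sets $Q_q^k$, is essential as opposed to the weaker Lemma \ref{bigl}.
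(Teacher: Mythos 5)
Your proof is correct and is essentially the paper's argument: the paper states the corollary as an immediate consequence of Lemma \ref{Sbigl}, and your summation over $k$ with the integral comparison, noting that the $\sqrt{||n|-k|}$ in the denominator forces the choice $l>2$, is exactly the omitted verification.
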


Now we turn back to the Fourier coefficients $(\theta_{j})_n$.
From Lemma \ref{coef2} we have the following estimate.

\begin{lem}\label{Lsmall}

Uniformly on $n$, one has
\begin{equation}\label{small}
\sum\limits_{k=0}^{\infty}\sum\limits_{q=0}^{2k-1}(q+1)^{-2}\sum\limits_{p\in
Q_q^k}|(\theta_{k^2+p})_n|^2\leq C.
\end{equation}

\end{lem}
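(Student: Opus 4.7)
The plan is to apply Lemma \ref{coef2} directly to each term $(\theta_{k^2+p})_n$, then combine it with the combinatorial bound on the size of $Q_q^k$ established in the discussion preceding Lemma \ref{nk}. The key arithmetic observation is that for $0\leq p\leq 2k$ we have $k\leq\sqrt{k^2+p}<k+1$, so the parameter $\sqrt{k^2+p}$ appearing in Lemma \ref{coef2} is essentially $k$, and the numerator estimate decouples from $p$.

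First, fix an integer $l\geq 2$. By Lemma \ref{coef2},
$$
|(\theta_{k^2+p})_n|\leq \frac{C_l}{(1+||n|-\sqrt{k^2+p}|)^l}.
$$
Since $|\sqrt{k^2+p}-k|\leq 1$ for $0\leq p\leq 2k$, the triangle inequality gives $1+||n|-\sqrt{k^2+p}|\geq \tfrac{1}{2}(1+||n|-k|)$, so
$$
|(\theta_{k^2+p})_n|^2\leq \frac{C'_l}{(1+||n|-k|)^{2l}},
$$
with a new constant $C'_l$ independent of $p$.

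Next, recall from the paragraph preceding Lemma \ref{nk} that the number of indices $p\in Q_q^k$ is at most $4\sqrt{q+1}$. Hence
$$
\sum_{p\in Q_q^k}|(\theta_{k^2+p})_n|^2\leq \frac{4C'_l\sqrt{q+1}}{(1+||n|-k|)^{2l}},
$$
and therefore
$$
\sum_{q=0}^{2k-1}(q+1)^{-2}\sum_{p\in Q_q^k}|(\theta_{k^2+p})_n|^2\leq \frac{4C'_l}{(1+||n|-k|)^{2l}}\sum_{q=0}^{\infty}(q+1)^{-3/2}\leq \frac{C''_l}{(1+||n|-k|)^{2l}}.
$$

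Finally, summing over $k$ and comparing the resulting number series with an integral,
$$
\sum_{k=0}^{\infty}\frac{C''_l}{(1+||n|-k|)^{2l}}\leq C,
$$
uniformly in $n$, provided $2l>1$. Our choice $l\geq 2$ suffices, completing the proof.

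There is no real obstacle here: the only point that needs a moment's attention is the transition from $\sqrt{k^2+p}$ to $k$ in the denominator, which is safe because $p$ ranges at most up to $2k$. The $(q+1)^{-2}$ weight together with the bound $|Q_q^k|\leq 4\sqrt{q+1}$ comfortably yields a summable $q$-series, and the rapid decay granted by Lemma \ref{coef2} takes care of the $k$-sum.
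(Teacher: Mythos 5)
Your proposal is correct and follows essentially the same route as the paper: you apply Lemma \ref{coef2} (noting that $\sqrt{k^2+p}$ differs from $k$ by at most $1$), invoke the bound $|Q_q^k|\leq 4\sqrt{q+1}$ from the discussion before Lemma \ref{nk}, and sum the resulting convergent series in $q$ and $k$. The paper's proof is just a terser version of the same computation, so no further comment is needed.
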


\begin{proof} As we mentioned above, each $Q_q^k$ has less than $4\sqrt{q+1}$ parameter $p$. Therefore, by virtue of Lemma \ref{coef2}
one has
$$
\sum\limits_{k=0}^{\infty}\sum\limits_{q=0}^{2k-1}(q+1)^{-2}\sum\limits_{p\in
Q_q^k}|(\theta_{k^2+p})_n|^2\leq
\sum\limits_{k=0}^{\infty}\frac{C_l}{(1+||n|-k|)^l}\sum\limits_{q=0}^{2k-1}\frac{4\sqrt{q+1}}{(q+1)^{2}}\leq C.
$$
\end{proof}

\section{Proofs of Theorems}

First,  we prove the estimate (\ref{theta}). Let
$\Theta_j(x)=\theta_{j+1}(x)-\theta_j(x)$.  Then $\theta_{j+1}\ast
f+\theta_j\ast f=2\, \theta_j\ast f+\Theta_j\ast f$. Note the
Fourier coefficients of the function $\Theta_j(x)$ are the numbers
$(\Theta_j)_n$, introduced above.

If for a sequence of numbers $\{F_q\}$ we have $F_0=0$, then
$$
F_q^2=\sum\limits_{j=0}^{q-1}[F_{j+1}-F_j][F_{j+1}+F_j],\,\, q\geq
1.
$$
Hence
$$
[\theta_q\ast f]^2 =\sum\limits_{j=0}^{q-1}[\Theta_j\ast
f]^2+2\,\sum\limits_{j=0}^{q-1}
[\Theta_j\ast f] [\theta_j\ast f],
$$
or
$$
\sup_{q >\,0}\left|\theta_q\ast
f\right|^2\leq\sum\limits_{j=0}^{\infty}|\Theta_j\ast
f|^2+2\,\sum\limits_{k=0}^{\infty}\sum\limits_{q=0}^{2k-1}\sum\limits_{p\in
Q_q^k}|\Theta_{k^2+p}\ast f|(q+1)|\theta_{k^2+p}\ast f|(q+1)^{-1}.
$$

Integrating over $T^N$ and making use of the inequality $2ab\leq
a^2+b^2$ one has

$$
\int\limits_{T^N}\sup_{q >\,0}\left|\theta_q\ast
f\right|^2\leq\sum\limits_{n}|f_n|^2\sum\limits_{j=0}^\infty|(\Theta_{j})_n|^2+
$$
$$
+\sum\limits_{n}|f_n|^2\sum\limits_{k=0}^{\infty}\sum\limits_{q=0}^{2k-1}(q+1)^2\sum\limits_{p\in
Q_q^k}|(\Theta_{k^2+p})_n|^2+
$$
$$
+\sum\limits_{n}|f_n|^2\sum\limits_{k=0}^{\infty}\sum\limits_{q=0}^{2k-1}(q+1)^{-2}\sum\limits_{p\in
Q_q^k}|(\theta_{k^2+p})_n|^2\leq
$$
(making use of Corollaries \ref{LBig} and \ref{CSbigl} and  Lemma
\ref{Lsmall} and since $f$ is $L_2$ - function)
$$
\leq C\sum\limits_n|f_n|^2=C\int\limits_{T^N}|f(x)|^2 dx.
$$

Thus, the estimate (\ref{theta}) and, consequently,  Theorem
\ref{MAX} is proved.

Now we prove Theorem \ref{MAIN}. So let $f\in L_2(T^N)$ and $f=0$
on an open set $\Omega\subset T^N$. We extend $f(x)$ to outside of
$T^N$ $2\pi$ - periodically on each variable $x_j$. In these
conditions we must prove that the equality (\ref{PV}) holds a.e.
on $\Omega$. If $x\in\Omega$ an arbitrary point, then to do this
it suffices to show validity of (\ref{PV}) a.e. on a ball with
center at $x$ and sufficiently small radius $R$, so that this ball
belongs to $\Omega$. Therefore without loss of generality we may
suppose, that $f$ is supported outside of this ball or by
translation invariance, $f$ is supported in $\{|x|\geq R\}$, and
prove convergence to zero of $S_\lambda f(x)$ a.e. on the ball
$\{|x|<r\}$ for any $r<R$. But this statement can be proved by a
standard technique based on Theorem \ref{MAX} (see \cite{ST}). Thus
Theorem \ref{MAIN} is also proved.

\section{ Acknowledgement} The author conveys thanks to Sh. A. Alimov
for discussions of this result and gratefully acknowledges Marcelo M. Disconzi (Vanderbilt
University, USA) for
support and hospitality.

 The author was supported by Foundation for Support of Basic Research of the Republic of Uzbekistan
 (project number is OT-F4-88).

\

\

\bibliographystyle{amsplain}

\begin{thebibliography}{10}

\bibitem {AAP} Alimov, Sh.A., Ashurov, R.R., Pulatov, A.K.: Multiple
Fourier Series and Fourier Integrals. Commutative Harmonic
Analysis, vol. IV, pp. 1–97. Springer, Berlin (1992)



\bibitem{IL} Il'in, V.A.:  On a generalized interpretation of the
principle of localization for Fourier series with respect to
fundamental systems of functions, Sib. Mat. Zh., \textbf{9}, 1093-1106
(1968)

\bibitem{BAS2} Bastys, A.J.: The generalized localization principle for
an N-fold Fourier integral, Dokl. Akd. Nauk SSSR, \textbf{278}, 777-778
(1984)


\bibitem{BAS} Bastys, A.J.: Generalized localization of Fourier
series with respect to the eigenfunctions of the Laplace operator
in the classes Lp, Litovskii Matematicheskii Sbornik, \textbf{31}, 387-405
(1991)


\bibitem{CAR} Carbery, A., Soria F.: Almost everywhere
convergence of Fourier integrals for functions in Sobolev spaces,
and an $L_2$-localization principle, Revista Mat. Iberoamericana
\textbf{4}, 319- 337 (1988)

\bibitem{CFV} Carbery, A., Rubio de Francia, J. L., Vega, L.: Almost
everywhere summability of Fourier integrals, J. London Math. Soc.
\textbf{38}, 513-524 (1988)

\bibitem{CRS} Carbery, A., Romera, E., Soria, F.: Radial weights and
mixed norm inequalities for the disc multiplier, J. Funct. Anal.
\textbf{109}, 52-75 (1992)

\bibitem{CAR2} Carbery, A. Soria, F.: Pointwise Fourier inversion and
localization in $R^n$,     J. Fourier Anal.
Appl., \textbf{3}, Special Issue, 847-858 (1997)

\bibitem{SJ}  Sj\"{o}lin, P.:Regularity and integrability of
spherical means, Monatsh. Math. \textbf{96}, 277-291 (1983)

\bibitem{AAA} Ashurov,R. R., Ahmedov,A., Ahmad Rodzi b. Mahmud,:The generalized localization
 for multiple Fourier integrals, J. Math. Anal. Appl., \textbf{ 371 } 832-841(2010)



\bibitem{AB} Ashurov, R.R., Butaev, A.: On the Pinsky phenomenon, J. Fourier Anal.
Appl. 16, 804-812 (2010)

\bibitem{ST} Stein,  E.M.,Weiss, G.: Itroduction to Fourier analysis
on Euclidean spaces, Princeton University Press, Princeton, New
Jersey, (1971)



\end{thebibliography}

\end{document}